\newcommand{\matr}[1]{\mathbf{#1}}
\newcommand{\bS}{\mathbb{S}^d}
\newcommand{\bs}{\mathbb{S}^2}
\newcommand{\bR}{\mathbb{R}}
\newcommand{\bP}{\mathbb{P}_{t}}
\newcommand{\bX}{\mathcal{X}_{N}}
\newcommand{\vx}{\mathbf{x}}
\newcommand{\sY}{\Yd}
\newcommand{\Yd}{Y^{}_{\ell, k}}
\newcommand{\dt}{d_{t}}
\newtheorem{theorem}{Theorem}
\newtheorem{lemma}{Lemma}
\newtheorem{corollary}{Corollary}
\newtheorem{definition}{Definition}
\newtheorem{remark}{Remark}
\newenvironment{proof}{{\bf Proof:}}{\hfill$\bull$\medskip}
\newcommand{\bull}{\vrule height 1.8ex width 1.0ex depth 0ex}
\begin{document}
\begin{center}
{\bf\LARGE A Note On Characterizations of Spherical
t-Designs}\footnote{The work is supported by NSF of China
(No.11301222,\,\, No.11226305), and  NSF of Guangdong Province(No.
S2012040007860)}.

\vspace{0.4in} {\small Congpei An\footnote{E-mail address:
tancpei@jnu.edu.cn,\,andbach@163.com}}

\vspace{0.1in}{\small Department of Mathematics, Jinan University,
Guangzhou 510632, China}

\end{center}

\noindent{\bf Abstract}  A set
$\mathcal{X}_{N}=\{\vx_1,\ldots,\vx_N\}$ of $N$ points on the unit
sphere $\bS,\,d\geq 2$ is a spherical $t$-design if the average of
any polynomial of degree at most $t$ over the sphere is equal to the
average value of the polynomial over $\mathcal{X}_{N}$. This paper
extends characterizations of spherical $t$-designs in \cite{ACSW}
from $\bs$ to general $\bS$. We show that for
$N\geq\dim(\mathbb{P}_{t+1})$,  $\bX$ is a stationary point set of a
certain non-negative quantity $A_{N,\,t}$ and a fundamental system
for polynomial space over $\bS$ with degree at most $t$, then $\bX$
is a spherical $t$-design. In contrast, we present that with $N \geq
\dim( \bP)$, a fundamental system $\bX$ is a spherical $t$-design if
and only if non-negative quantity $D_{N,\,t}$ vanishes. In addition,
the still unanswered questions about construction of spherical
$t$-designs are discussed.

\vspace{0.05in} \noindent \textbf{Keywords:} spherical $t$-design;
non-negative quantity; nonlinear equation; fundamental system
\vspace{0.05in} \noindent\\


\section{Introduction}
\setcounter{equation}{0} \setcounter{lemma}{0}
\setcounter{proposition}{0} \setcounter{corollary}{0}

Let $\mathcal{X}_{N}=\left\{ {\mathbf{x}}_{1},\ldots ,{\mathbf{x}}%
_{N}\right\} $ be a set of $N$ points on the sphere~
$\mathbb{S}^{d}=\left\{~
\mathbf{x}\text{ }|~~\Vert \mathbf{x}\Vert _{2}=1\right\} $ $\subset \mathbb{R}%
^{d+1}$, and let $\mathbb{P}_{t}(\mathbb{S}^d)$ be the linear space
of restrictions of polynomials of degree at most $t$ in $d+1$
variables to $\mathbb{S}^{d}$. The set $\mathcal{X}_{N}$ is a
\emph{spherical $t$-design} if the average of any polynomial of
degree at most $t$ over the sphere is equal to the average value of
the polynomial over $\mathcal{X}_{N}$. That is
\begin{equation}\label{pnorm}
\frac{1 %
}{N}\sum_{j=1}^{N}p\left(
\mathbf{x}_{j}\right)=\frac{1}{\omega_d}\int_{\mathbb{S}^{d}}p(\mathbf{x})d\omega_d
(\mathbf{x})\quad\quad \forall p\in \mathbb{P}_{t}(\mathbb{S}^d),
\end{equation}
where $%
d\omega_{d} (\mathbf{x})$ denotes the surface measure on
$\mathbb{S}^{d},$ and $\omega_d$ is the surface area of the unit
sphere $\mathbb{S}^{d}$.
From (\ref{pnorm}),  it can be seen that $\mathcal{X}_{N}$ is a spherical $t$%
-design if the equal weight cubature rule (with weight $\omega_d/N$) is exact for all spherical
polynomials $p\in \mathbb{P}_{t}(\mathbb{S}^d)$.

Spherical $t$-designs was introduced by Delsarte, Goethals, and
Seidel \cite{DXY} in 1977. And the lower bound on $N$ was given as
the following \cite{DXY}:
\begin{eqnarray}\label{eq:Ntds}
N&\geq&{\displaystyle\left\{
\begin{array}{ll}2\binom{d+s}{d}, & \textrm{if}\ t=2s+1,
\\\binom{d+s}{d}+\binom{d+s-1}{d}, & \textrm{if}\ \
t=2s.\end{array}\right.}
\end{eqnarray}

In the past three decades, many works have been done on spherical
$t$-designs, see literature
\cite{ACSW}\cite{ACSW2}\cite{Bj}\cite{EBannai}\cite{EBnnai_09II}\cite{EBnnai_12}\cite{Hardin}\cite{Kore}\cite{Sloan}\cite{HSE}\cite{womweb}.
Seymour and Zaslavsky \cite{Seymour} showed that a spherical
$t$-design exists for any $t$ if $N$ is sufficiently large. In 2009,
there is a comprehensive survey of research on spherical $t$-designs
in the last three decades provided by Bannai and Bannai
\cite{EBannai}. As pointed out in \cite{Sloan}, $\bX$ is a spherical
$t$-design if and only if a certain non-negative quantity
$A_{N,\,t}$ is zero. Moreover, Sloan and Womersley \cite{Sloan} used
a condition based on the mesh norm to help determine if a stationary
point of $A_{N,\,t}$ is a spherical $t$-design. In particular for
$\bs$, {Chen and Frommer and Lang proved} that spherical $t$-designs
with $N=(t+1)^2$ is enough for all $t$ up to $t=100$. The well
conditioned spherical $t$-designs \cite{ACSW} have been applied into
interpolation, numerical integration and regularized least squares
approximation on $\bs$ \cite{ACSW2}. In very recently, the existence
of a spherical $t$-design for all $N\geq c t^d$ for some unknown
$c>0$ has been proved by Bondarenko, Radchenko and Viazovska
\cite{Bondarenko13}. In addition, Bondarenko, Radchenko and
Viazovska \cite{Bondarenko13_1} proved the existence of well
separated spherical $t$-designs for all $N\geq c t^d$ for some
unknown $c>0$. To our knowledge, there is no constructive proof that
spherical $t$-designs on general unit sphere $\bS$ with
$N\sim\dim(\bP)$. How to find a spherical $t$-design in general $
\bS$ for large $t$ is merit to be studied.

Spherical $t$-designs can be used to realize numerical interpolation
(when $N$ is right)\cite{ACSW}, integration
\cite{ACSW}\cite{Tagami}, hyperinterpolation \cite{sloan95},
filtered hyperinterpolation \cite{SW12} and regularized least
squares apprroximation \cite{ACSW2}. Among most of these
approximation schemes, it can be seen that the number of points should be
no less than the dimension of polynomial space. Therefore, it is
natural that we consider $N \geq \dim(\bP)$ in this paper.

 Our aim is to explore characterizations of spherical
$t$-designs on general unit sphere $\bS$ with the requirement that
$\bX$ is a fundamental system. We extend the \cite[Theorem
3.6]{ACSW} in $\bs$ based on \cite{ACSW} to general sphere $\bS$,
$d\geq2$.
On the other hand, we generalize the nonlinear approach to find a
spherical $t$-design for $N$ no less than the dimension of $\bP$ in
\cite{ACSW}. Furthermore, we show that a point set is a spherical
$t$-design if and only if a nonnegative quantity is zero when $N
\geq \dim(\bP)$.

In the next section we give some necessary materials of spherical
polynomials and known results. In Section 3, we present some results
on the unit sphere $\bS$. We sketch proofs in Section 4. Finally, in
Section 5 we discussion the known properties on spherical
$t$-designs, and draw attention to some still unknown problems.

\section{Preliminaries}
In the remainder of this paper, we follow  notation and terminology
in \cite{ACSW} and \cite{Sloan}.

Let $\{Y_{\ell ,k}\ :\ k=1,\ldots ,M(d,\ell),~\ell =0,1,\ldots,
t\}$ be an orthonormal set of (real) spherical harmonics, with $%
Y_{\ell ,k}$ a spherical harmonic of degree $\ell $ (see
\cite{Atkinson1} and \cite{Muller}).

As is known to all, we have
\begin{equation}
\begin{aligned}
&\bP:=\mathbb{P}_t(\bS)=\rm{span} \{\it{\sY}:\, k=1,\ldots,M(d,\ell),\,l=0,\ldots,t\} ,\\
&M(d,l)=\frac{(2l+d-1)(l+d-2)!}{(d-1)!l!},\\
&d_t:=\rm{dim}(\it
\bP)=\sum_{l=0}^{t}M(d,\,\ell)=M(d+1,\,t)\sim(t+1)^d,
\end{aligned}
\end{equation}
where $a_t\sim b_t$ means that positive constants $c_1,\,c_2$ exist,
independently of $t$, such that $c_1 a_t\leq b_t\leq c_2 a_t$. The
\emph{addition theorem} \cite{Muller}  for spherical harmonics is
\begin{equation}\label{eq:addTh}
\underset{k=1}{\overset{M(d,\,\ell)}{\sum }}Y_{\ell ,k}(\mathbf{x})Y_{\ell ,k}(%
\mathbf{y})=\frac{M(d,\,\ell)}{\omega_d }P_{\ell
}^{^{({d+1})}}\left( \mathbf{x}\cdot \mathbf{y}\right)
~~~~\forall~\text{ }\mathbf{x},\mathbf{y}\in \bS,
\end{equation}%
where $\mathbf{x}\cdot \mathbf{y}$ is the inner product in
$\mathbb{R}^{d+1}$ and $P_{\ell }^{({d+1})}$ is the Legendre
polynomial in $ \bR^{d+1}$ \cite{Szeg} of degree $\ell$ normalized
so that $P_{\ell }^{({d+1})}\left( 1\right) =1$.

For $t\geq 1,~N\geq d_t$ let the matrices $%
\mathbf{Y}_{t}^{0}$ and $\mathbf{Y}_{t}$ be defined by
\begin{equation}
\mathbf{Y}_{t}^{0}:=[Y_{\ell ,k}(\mathbf{x}%
_{j})],\qquad k=1,\dots,M(d,\ell),~~\ell =1,\dots,t;\qquad
j=1,\dots,N,\label{Yt0}
\end{equation}%
and
\begin{equation}
\mathbf{Y}_{t}:=\left[
\begin{array}{c}
\frac{1}{\sqrt{\omega_{d} }}\mathbf{e}^T \\
\mathbf{Y}_{t}^{0}%
\end{array}%
\right] \in \mathbb{R}^{{\dt}\times N},
\end{equation}%
where $\mathbf{e}=[1,\dots,1]^{T}\in \mathbb{R}^{N}.$

It is well know that $\bX$ a spherical $t$-design if and only if
\emph{Weyl sums} vanishes (see for example \cite{Sloan} and
\cite{DXY}), i.e.,
\begin{equation} \label{Weyl}
\underset{j=1}{\overset{N}{\sum }}Y_{\ell ,k}(\mathbf{x}%
_{j})=0,~~~~~~k=1,\dots,M(d,\,\ell),~~\ell=1,\dots, t.
\end{equation}

With the aid of \eqref{Yt0}, \eqref{Weyl} can be written in
matrix-vector form as
\begin{equation}\label{def:r(X)}
\mathbf{r}(\mathcal{X}_{N}):=\mathbf{Y}_{t}^{0}\mathbf{e}=%
\mathbf{0}.
\end{equation}%
where $\mathbf{r}(\mathcal{X}_{N})\in\mathbb{R}^{d_{t}-1}.$

Consequently, we can define the nonnegative quantity $A_{N,t}$ %
\begin{equation*}
A_{N,\,t}\left( \mathcal{X}_{N}\right) :=\frac{\omega_d }{N^{2}}\mathbf{r}(\mathcal{X}_{N})^{T}\mathbf{r}(%
\mathcal{X}_{N}).
\end{equation*}

The distance between any two points $\mathbf{x}$ and $\mathbf{y}$ on
the unit sphere $\bS$ is measured by the \emph{geodesic distance}
$dist(\mathbf{x}, \mathbf{y}):= \cos^{-1}(\mathbf{x}\cdot
\mathbf{y})\in[0,\pi]$.
\begin{definition}
  The mesh norm $h_{\mathcal{X}_N}$ of a point set
  $\mathcal{X}_N\subset\bS$ is
  \begin{equation}
h_{\mathcal{X}_{N}}:=\max_{{\mathbf{y}}\in \bS}\min_{\mathbf{x}_i%
\in \mathcal{X}_{N}}dist(\mathbf{y}, \mathbf{x}_i), \label{mesh}
\end{equation}
\end{definition}

\begin{definition}
  \label{def:fundsys}
  The set $\mathcal{X}_{N}\subset\bS$ is a fundamental
  system for $\mathbb{P}_t$ if the zero polynomial is the only
  element of $\mathbb{P}_t$ that vanishes at each point in
  $\mathcal{X}_{N}$, that is
  \begin{equation}
  p\in\mathbb{P}_t,~~~~p(\mathbf{x}_i)=0,~~~~~~i=1,...,N\label{eq:fundsysdef}
   \end{equation}
implies~~$p(\mathbf{x})\equiv0$ ~~for~~all~~
  $\mathbf{x}\in\bS$.
\end{definition}
For the case on $\bs$ \cite{ACSW}, the definition is also valid when
$N$ is large than the dimension of the polynomial space. In this
paper, we claim that this statement is also true on $\bS$, see Lemma
\ref{Lemmasing}.

As shown in \cite{Chen} \cite{CF} and \cite{ACSW}, one can find a
spherical $t$-design on $\bs$ by solving a system of underdetermined
nonlinear equations.  In this paper, we define the nonlinear
function $\matr{C}_{t}:(\bS)^N\to\bR^{N-1}$ as follows:

\begin{equation}\label{eq:Ct}
\matr{C}_{t}(\mathcal{X}_{N}):=\matr{EG}_{t}({\mathcal{X}_{N}})\mathbf{e},
\end{equation}%
where%
\begin{equation}
\matr{E}:=\left[\textbf{e},\,-\matr{I}_{N-1}
\right]\in\mathbb{R}^{(N-1)\times N}~~~\text{and}~~~ \matr{G}_{t}
:=\mathbf{Y}_{t} ^{T}\mathbf{Y}_{t}\in\bR^{N \times N}. \label{Gdef}
\end{equation}


\section{Theorems}

In this section we present some results on $\bS$, which are
generalized from $\bs$ in \cite{ACSW}.
\begin{lemma}
\label{Lemmasing}$\mathcal{X}_{N}$ is a fundamental system for
$\mathbb{P}_{t}$~if and only if~~$\mathbf{Y}_{t}^{{}}$\textit{\ is
of full row rank $\dim(\mathbb{P}_t)$  }.
\end{lemma}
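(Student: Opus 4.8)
The plan is to translate the functional condition of being a fundamental system into a purely linear-algebraic statement about the matrix $\mathbf{Y}_{t}$, exploiting that the orthonormal spherical harmonics $\{Y_{\ell,k}\}$ form a basis of $\mathbb{P}_{t}$. The rows of $\mathbf{Y}_{t}$ are exactly the sampled values $[Y_{\ell,k}(\mathbf{x}_1),\ldots,Y_{\ell,k}(\mathbf{x}_N)]$ of each basis element, so the transpose $\mathbf{Y}_{t}^{T}$ is the evaluation (sampling) operator sending a coefficient vector to the vector of nodal values of the corresponding polynomial. The entire lemma then reduces to the statement that this sampling operator is injective.

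More precisely, first I would expand an arbitrary $p\in\mathbb{P}_{t}$ in the orthonormal basis as $p=\sum_{\ell=0}^{t}\sum_{k=1}^{M(d,\ell)}c_{\ell,k}Y_{\ell,k}$, collecting the coefficients into a vector $\mathbf{c}\in\mathbb{R}^{d_{t}}$. Because $\{Y_{\ell,k}\}$ is a basis of $\mathbb{P}_{t}$, the assignment $p\mapsto\mathbf{c}$ is a linear isomorphism, so $p\equiv0$ holds if and only if $\mathbf{c}=\mathbf{0}$. Next I would observe, using the definitions of $\mathbf{Y}_{t}$ and $\mathbf{Y}_{t}^{0}$ (and that the degree-$0$ harmonic equals $1/\sqrt{\omega_d}$, which accounts for the first row $\tfrac{1}{\sqrt{\omega_d}}\mathbf{e}^{T}$), that evaluating $p$ at the nodes yields $[p(\mathbf{x}_1),\ldots,p(\mathbf{x}_N)]^{T}=\mathbf{Y}_{t}^{T}\mathbf{c}$.

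With these identifications in place, the defining condition $p(\mathbf{x}_i)=0$ for $i=1,\ldots,N$ becomes $\mathbf{Y}_{t}^{T}\mathbf{c}=\mathbf{0}$, and the fundamental-system requirement that this forces $p\equiv0$ becomes the requirement that $\mathbf{Y}_{t}^{T}\mathbf{c}=\mathbf{0}$ implies $\mathbf{c}=\mathbf{0}$. Hence $\mathcal{X}_{N}$ is a fundamental system for $\mathbb{P}_{t}$ if and only if $\mathbf{Y}_{t}^{T}$ has trivial null space, i.e. full column rank $d_{t}$, which is equivalent to $\mathbf{Y}_{t}$ having full row rank $d_{t}=\dim(\mathbb{P}_{t})$. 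I would also record in passing that full row rank is possible only when $N\geq d_{t}$, consistent with the standing hypothesis $N\geq d_{t}$.

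I do not anticipate a genuine obstacle: the argument is a direct chain of equivalences once the sampling operator $\mathbf{Y}_{t}^{T}$ is identified, and the sole point requiring care is the bookkeeping between the polynomial $p$ and its coefficient vector $\mathbf{c}$ — in particular, invoking the basis property of the spherical harmonics so that vanishing of $p$ as a function on $\bS$ is equivalent to vanishing of $\mathbf{c}$. Everything else is the standard fact that injectivity of a linear map is equivalent to its matrix having full column rank.
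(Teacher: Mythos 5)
Your proof is correct, and it is essentially the argument the paper has in mind: the paper itself gives no proof of Lemma \ref{Lemmasing}, remarking only that it ``can be proved by the same way'' as in \cite{ACSW}, and the argument there is exactly your identification of $\mathbf{Y}_{t}^{T}$ as the sampling operator on coefficient vectors, so that the fundamental-system property is injectivity of $\mathbf{Y}_{t}^{T}$, i.e.\ full row rank $d_t$ of $\mathbf{Y}_{t}$. Your bookkeeping (the constant harmonic $1/\sqrt{\omega_d}$ giving the first row, and vanishing of $p$ on $\bS$ being equivalent to vanishing of its coefficient vector by orthonormality of the $Y_{\ell,k}$) is exactly the care needed, so nothing is missing.
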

\begin{theorem}
\label{Th.meshfund}\itshape\label{sd}If the mesh norm of the point
set $\mathcal{X}_N$ satisfies
 $h_{\mathcal {X}_{_{N}}}<\frac{1}{t}$, then $\mathcal{X}%
_{N}$ is a fundamental system for $\mathbb{P}_{t}$.
\end{theorem}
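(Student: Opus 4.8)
The plan is to argue by contradiction, the only nontrivial ingredient being a sharp Bernstein inequality obtained by restricting spherical polynomials to great circles. Suppose $p\in\bP$ satisfies $p(\vx_i)=0$ for every $\vx_i\in\bX$ but $p\not\equiv 0$. By compactness of $\bS$ and continuity of $p$, the value $M:=\maxnorm{p}=|p(\vy^{*})|$ is attained at some $\vy^{*}\in\bS$, and $M>0$. The mesh-norm hypothesis supplies a point $\vx_i\in\bX$ with $dist(\vy^{*},\vx_i)\le h_{\mathcal{X}_N}<1/t$; since $1/t\le 1<\pi$, the points $\vy^{*}$ and $\vx_i$ are not antipodal, so they are joined by a unique minimizing geodesic arc lying on a great circle $\Gamma$.

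First I would reduce the problem to the circle. Parametrizing $\Gamma$ by arc length as $\gamma(\theta)=\cos\theta\,\mathbf{u}+\sin\theta\,\mathbf{v}$ with $\mathbf{u},\mathbf{v}$ orthonormal in $\bRd$, each ambient coordinate of $\gamma(\theta)$ is a linear combination of $\cos\theta$ and $\sin\theta$. Hence the restriction $g(\theta):=p(\gamma(\theta))$ of the degree-$t$ polynomial $p$ is a polynomial of degree at most $t$ in $\cos\theta,\sin\theta$, that is, a univariate trigonometric polynomial of degree at most $t$. Clearly $\maxnorm{g}\le\maxnorm{p}=M$, and the classical Bernstein inequality for trigonometric polynomials yields $\maxnorm{g'}\le t\,\maxnorm{g}\le tM$.

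Then I would close the argument by the fundamental theorem of calculus along $\Gamma$. Writing $\theta_0,\theta_1$ for the arc-length parameters of $\vx_i$ and $\vy^{*}$, so that $|\theta_1-\theta_0|=dist(\vy^{*},\vx_i)$, and using $g(\theta_0)=p(\vx_i)=0$, one obtains
\begin{equation*}
M=|p(\vy^{*})|=|g(\theta_1)-g(\theta_0)|\le |\theta_1-\theta_0|\,\maxnorm{g'}\le dist(\vy^{*},\vx_i)\,tM<\tfrac{1}{t}\,t\,M=M,
\end{equation*}
which is a contradiction. Therefore $p\equiv 0$, and $\bX$ is a fundamental system for $\bP$.

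The main obstacle is the Bernstein step, and it is really the place where the sharp constant matters. One must record two facts: that a spherical polynomial of degree $t$ restricts to a trigonometric polynomial of degree at most $t$ on every great circle, and that the extremal Bernstein constant for such polynomials is exactly $t$. It is this sharpness that makes the threshold $h_{\mathcal{X}_N}<1/t$ (rather than a weaker $h_{\mathcal{X}_N}<c/t$) the correct one: any loss in the constant would force a correspondingly stronger mesh-norm requirement in the statement.
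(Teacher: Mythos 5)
Your proof is correct, and it is essentially the argument this paper relies on: the paper never writes out a proof of Theorem \ref{Th.meshfund}, but its remark that the theorem ``can be proved by the same way from \cite{ACSW}'' points exactly to your argument --- restrict a hypothetical nonzero polynomial vanishing on $\mathcal{X}_N$ to the great circle joining a point where $|p|$ attains its maximum to a nearby node, observe that this restriction is a trigonometric polynomial of degree at most $t$, and combine the sharp Bernstein inequality with the mesh-norm bound $h_{\mathcal{X}_N}<1/t$ to reach the contradiction $\maxnorm{p}<\maxnorm{p}$. Your version differs only cosmetically (fundamental theorem of calculus in place of the mean value theorem, and stated directly on $\bS$ rather than $\bs$), which is precisely the generalization the paper claims.
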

\begin{remark}
  Lemma \ref{Lemmasing} and Theorem \ref{Th.meshfund} can be proved
by the same way from \cite{ACSW}. Theorem \ref{Th.meshfund} shows
the condition of mesh norm is stronger that the condition of
fundamental system. The explain for $\bs$ is given by \cite{ACSW}.
\end{remark}
\begin{theorem}
\upshape\label{Th:variation}Let $t\geq 1$ and $N\geq \dim(\mathbb{P}_{t+1}).$ Let $%
\mathcal{X}_{N}\subset \bS$ be a stationary point of $A_{N,\,t}$. {%
Then} $\mathcal{X}_{N}$ is a spherical $t$-design, {or} there exists
a
non-zero polynomial $p\in \mathbb{P}_{t+1}$, such that $p\left( \mathbf{x}%
_{j}\right) =0$ for $j=1,\ldots ,N.$
\end{theorem}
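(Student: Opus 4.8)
The plan is to read off the stationarity condition variationally and then extract a spherical polynomial of degree $t+1$ from the surface gradient of the associated ``residual polynomial''. First I would compute the tangential gradient of $\AX$ with respect to each point $\vx_j\in\bS$. Writing the residual components $r_{\ell,k}:=\sum_{i=1}^{N}Y_{\ell,k}(\vx_i)$, so that $\mathbf{r}(\bX)=\mathbf{Y}_{t}^{0}\mathbf{e}=(r_{\ell,k})$, we have $A_{N,t}=\frac{\omega_d}{N^{2}}\sum_{\ell=1}^{t}\sum_{k=1}^{M(d,\ell)}r_{\ell,k}^{2}$. I introduce the spherical polynomial $g(\vx):=\sum_{\ell=1}^{t}\sum_{k=1}^{M(d,\ell)}r_{\ell,k}\,Y_{\ell,k}(\vx)\in\bP$, which by construction carries no degree-zero component and hence has zero mean over $\bS$. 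Although each $r_{\ell,k}$ depends on all the points, only the $j$-th summand varies with $\vx_j$, so the chain rule yields $\tfrac{2\omega_d}{N^{2}}\nabla^{*}g(\vx_j)$ for the tangential derivative in the $j$-th block; consequently $\bX$ is a stationary point of $A_{N,t}$ if and only if the surface gradient $\nabla^{*}g(\vx_j)=0$ for every $j=1,\ldots,N$.

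The key step is to verify that each Cartesian component of $\nabla^{*}g$ restricts to a spherical polynomial of degree at most $t+1$. For this I would pass to the homogeneous harmonic extension: let $H_{\ell,k}$ be the degree-$\ell$ homogeneous harmonic polynomial on $\bRd$ whose restriction to $\bS$ is $Y_{\ell,k}$, and set $G:=\sum_{\ell,k}r_{\ell,k}H_{\ell,k}$, a polynomial of degree at most $t$. On $\bS$ one has $\nabla^{*}g=(\mathbf{I}-\vx\vx^{T})\nabla G$, so the $m$-th component equals $\partial_m G-x_m(\vx\cdot\nabla G)$. Euler's relation $\vx\cdot\nabla H_{\ell,k}=\ell H_{\ell,k}$ shows that $\vx\cdot\nabla G$ has degree at most $t$, so $x_m(\vx\cdot\nabla G)$ has degree at most $t+1$, while $\partial_m G$ has degree at most $t-1$. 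Hence $p_m:=(\nabla^{*}g)_m$ is (the restriction of) an element of $\mathbb{P}_{t+1}$ for each $m=1,\ldots,d+1$, and the stationarity condition forces $p_m(\vx_j)=0$ for all $j$.

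The dichotomy then follows immediately. If some $p_m\not\equiv0$, it is the desired nonzero polynomial in $\mathbb{P}_{t+1}$ vanishing on $\bX$. Otherwise $\nabla^{*}g\equiv0$ on the connected sphere $\bS$, so $g$ is constant; since $g$ has zero mean and no constant term, $g\equiv0$, whence every $r_{\ell,k}=0$, i.e.\ $\mathbf{r}(\bX)=\mathbf{0}$ and $A_{N,t}(\bX)=0$, so $\bX$ is a spherical $t$-design. The hypothesis $N\geq\dim(\mathbb{P}_{t+1})$ is what makes the second alternative informative: by Lemma \ref{Lemmasing} the nonexistence of such a polynomial is precisely the assertion that $\bX$ is a fundamental system for $\mathbb{P}_{t+1}$, so the theorem says that any stationary point which is a $\mathbb{P}_{t+1}$-fundamental system is forced to be a spherical $t$-design.

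I expect the main obstacle to be the careful bookkeeping in the second paragraph: confirming that the intrinsically tangential object $\nabla^{*}g$ really does restrict to genuine spherical polynomials of degree $t+1$ (and not merely of degree $2t$, as the naive squared-norm $\norm{\nabla^{*}g}^{2}$ would suggest), and that the factor structure of the tangential-gradient computation is correct so that the residual polynomial $g$ enters cleanly with no spurious contribution from the normal direction or from the constant harmonic. By contrast, the connectedness argument, the zero-mean reduction $g\equiv0$, and the identification of the second alternative with the failure of the fundamental-system property via Lemma \ref{Lemmasing} are routine.
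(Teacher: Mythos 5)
Your proof is correct, and its overall architecture is the same as the paper's: stationarity yields a degree-$t$ polynomial whose surface gradient vanishes at every point of $\bX$, and the Cartesian components of that gradient are then recognized as elements of $\mathbb{P}_{t+1}$ vanishing on $\bX$, giving the dichotomy. The difference is one of self-containedness rather than of strategy. The paper black-boxes your first paragraph as Lemma~\ref{le4}, quoted from \cite{Sloan}: a stationary point of $A_{N,\,t}$ is either a spherical $t$-design or admits a non-constant $q\in\mathbb{P}_t$ with $\nabla^{*}q(\vx_j)=0$ for all $j$. Your residual polynomial $g=\sum_{\ell,k}r_{\ell,k}Y_{\ell,k}$ and the identity $\nabla^{*}_{\vx_j}A_{N,\,t}=\tfrac{2\omega_d}{N^2}\nabla^{*}g(\vx_j)$ are precisely the content of the proof of that lemma, so you have re-derived it rather than cited it. Similarly, for the degree count the paper invokes the fact (from \cite{GW}, Chapter 12) that each component of $\nabla^{*}$ applied to a degree-$\ell$ harmonic is a combination of harmonics of degrees $\ell-1$ and $\ell+1$, whereas you reach the same conclusion by elementary means: the homogeneous harmonic extension $G$, the projection formula $\nabla^{*}g=(\mathbf{I}-\vx\vx^{T})\nabla G$, and Euler's relation $\vx\cdot\nabla H_{\ell,k}=\ell H_{\ell,k}$. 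Your route buys a proof with no external ingredients (and your closing dichotomy --- all $p_m\equiv 0$ forces $g\equiv 0$ by connectedness and zero mean, hence the design property --- cleanly absorbs the non-constancy argument that the paper runs separately); the paper's route buys brevity by citation. You are also right that the hypothesis $N\geq\dim(\mathbb{P}_{t+1})$ is never used in the proof itself; in both versions it only becomes relevant in Corollary~\ref{ns}, via Lemma~\ref{Lemmasing} and the fundamental-system interpretation of the second alternative.
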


By the definition of fundamental system (see Definition
\ref{def:fundsys}), we immediately have the following result.
\begin{corollary}
\label{ns} \label{MM}Let $t\geq 1$ and $$N\geq \dim(\mathbb{P}_{t+1}).$$ %
\color{black}Assume $\mathcal{X}_{N}\subset \mathbb{S}^{d}$ is a
stationary
point of $A_{N,\,t}$, and $\mathcal{X}_{N}$ is a fundamental system for $%
\mathbb{P}_{t+1.}$\color{black}~Then $\mathcal{X}_{N}$ is a spherical $t$%
-design.
\end{corollary}

\begin{theorem}
\label{Th:C} \upshape Let $N\geq d_t$. Suppose that $\mathcal{X}_{N}=\{%
\mathbf{x}_{1},...,\mathbf{x}_{N}\}$ is a fundamental system for $\mathbb{P}%
_{t}$.  $C_{t}\left( \mathcal{X}_{N}\right) =\mathbf{0}$ if and only if $%
\mathcal{X}_{N}$ is a spherical $t$-design.
\end{theorem}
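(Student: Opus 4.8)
The plan is to prove both implications of the equivalence $\matr{C}_{t}(\bX)=\matr{0}\iff\bX$ is a spherical $t$-design, and the key observation is that the condition $\matr{C}_{t}(\bX)=\matr{E}\,\matr{G}_{t}(\bX)\matr{e}=\matr{0}$ is, by definition of $\matr{E}=[\matr{e},-\matr{I}_{N-1}]$, equivalent to saying that all $N$ entries of the vector $\matr{G}_{t}(\bX)\matr{e}\in\bR^N$ are equal. Indeed, writing $\matr{v}:=\matr{G}_{t}(\bX)\matr{e}$ with components $v_1,\dots,v_N$, the matrix $\matr{E}$ picks out the differences $v_1-v_i$ for $i=2,\dots,N$, so $\matr{C}_{t}(\bX)=\matr{0}$ holds precisely when $v_1=v_2=\cdots=v_N$, i.e. $\matr{G}_{t}(\bX)\matr{e}=c\,\matr{e}$ for some scalar $c$.

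Next I would translate the entries of $\matr{v}=\matr{G}_{t}(\bX)\matr{e}$ into the language of spherical harmonics. Since $\matr{G}_{t}=\mathbf{Y}_{t}^{T}\mathbf{Y}_{t}$, the $i$-th component is
\begin{equation*}
v_i=\sum_{j=1}^{N}(\mathbf{Y}_{t}^{T}\mathbf{Y}_{t})_{ij}
=\sum_{j=1}^{N}\left(\frac{1}{\omega_d}+\sum_{\ell=1}^{t}\sum_{k=1}^{M(d,\ell)}Y_{\ell,k}(\mathbf{x}_i)Y_{\ell,k}(\mathbf{x}_j)\right).
\end{equation*}
Applying the addition theorem \eqref{eq:addTh}, this becomes $v_i=\sum_{j=1}^{N}\frac{1}{\omega_d}\sum_{\ell=0}^{t}M(d,\ell)P_{\ell}^{(d+1)}(\mathbf{x}_i\cdot\mathbf{x}_j)$, which is exactly $N$ times the value at $\mathbf{x}_i$ of the function $\mathbf{x}\mapsto\frac{1}{N}\sum_{j}K_t(\mathbf{x}\cdot\mathbf{x}_j)$, where $K_t$ is the reproducing kernel of $\mathbb{P}_t$. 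The spherical $t$-design condition, via the Weyl sums \eqref{Weyl} or equivalently \eqref{def:r(X)}, states that $\sum_{j=1}^{N}Y_{\ell,k}(\mathbf{x}_j)=0$ for all $\ell=1,\dots,t$; I would show this is precisely what makes the function $\sum_{j}K_t(\mathbf{x}\cdot\mathbf{x}_j)$ reduce to its constant term, hence $v_i$ constant in $i$.

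For the direction $\bX$ spherical $t$-design $\Rightarrow\matr{C}_{t}(\bX)=\matr{0}$, I would substitute $\mathbf{r}(\bX)=\mathbf{Y}_{t}^{0}\matr{e}=\matr{0}$ into the expression for $v_i$: the degree-$\ell\ge1$ terms carry a factor $\sum_{j}Y_{\ell,k}(\mathbf{x}_j)=0$, leaving $v_i=N/\omega_d$ independent of $i$, so $\matr{C}_{t}(\bX)=\matr{0}$. This direction needs no fundamental-system hypothesis. The converse is where that hypothesis does the real work and is the main obstacle. From $\matr{C}_{t}(\bX)=\matr{0}$ I only obtain $\matr{G}_{t}(\bX)\matr{e}=c\,\matr{e}$, i.e. $\mathbf{Y}_{t}^{T}(\mathbf{Y}_{t}\matr{e})=c\,\matr{e}$; I would like to conclude $\mathbf{Y}_{t}^{0}\matr{e}=\matr{0}$. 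The plan is to use Lemma \ref{Lemmasing}: since $\bX$ is a fundamental system for $\mathbb{P}_t$, the matrix $\mathbf{Y}_{t}$ has full row rank $d_t$, so $\mathbf{Y}_{t}$ is surjective and $\mathbf{Y}_{t}\mathbf{Y}_{t}^{T}$ is invertible.

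I expect the delicate point to be extracting $\mathbf{r}(\bX)=\matr{0}$ from $\mathbf{Y}_{t}^{T}\mathbf{Y}_{t}\matr{e}=c\,\matr{e}$ cleanly. The idea is to multiply on the left by $\mathbf{Y}_{t}$, giving $(\mathbf{Y}_{t}\mathbf{Y}_{t}^{T})(\mathbf{Y}_{t}\matr{e})=c\,\mathbf{Y}_{t}\matr{e}$, so $\mathbf{Y}_{t}\matr{e}$ is an eigenvector of the invertible Gram matrix $\mathbf{Y}_{t}\mathbf{Y}_{t}^{T}$; then I would identify the value of $\mathbf{Y}_{t}\matr{e}$ directly. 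The vector $\mathbf{Y}_{t}\matr{e}\in\bR^{d_t}$ has first entry $\frac{1}{\sqrt{\omega_d}}\matr{e}^{T}\matr{e}=N/\sqrt{\omega_d}$ and remaining entries exactly the Weyl sums $\sum_{j}Y_{\ell,k}(\mathbf{x}_j)$, i.e. the components of $\mathbf{r}(\bX)$; so showing $\mathbf{r}(\bX)=\matr{0}$ amounts to showing all but the first entry of $\mathbf{Y}_{t}\matr{e}$ vanish. I would close the argument by comparing the scalar $c$ with $N/\omega_d$: taking the first component of $\mathbf{Y}_{t}^{T}\mathbf{Y}_{t}\matr{e}=c\,\matr{e}$ together with the full-rank consequence that the only way $v_i$ can be constant is for the nonconstant spherical harmonic contributions to cancel simultaneously at every node, which the fundamental-system property forces to mean genuine vanishing of the Weyl sums rather than accidental equality at the $N$ points. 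This last pinning-down of $c=N/\omega_d$ and hence $\mathbf{r}(\bX)=\matr{0}$ is the crux; once it is in place, \eqref{def:r(X)} and \eqref{Weyl} give that $\bX$ is a spherical $t$-design.
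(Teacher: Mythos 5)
Your easy direction (spherical $t$-design $\Rightarrow\matr{C}_t(\bX)=\matr{0}$) is correct and matches the paper: substituting $\mathbf{r}(\bX)=\mathbf{Y}_t^0\matr{e}=\matr{0}$ makes every entry of $\matr{G}_t\matr{e}$ equal to $N/\omega_d$. The converse, however, contains a genuine gap, and it sits exactly where you flag ``the crux.'' After the correct reduction to $\matr{G}_t\matr{e}=c\,\matr{e}$, your only concrete step is to left-multiply by $\mathbf{Y}_t$ and observe that $\mathbf{Y}_t\matr{e}$ is an eigenvector of the invertible Gram matrix $\mathbf{H}:=\mathbf{Y}_t\mathbf{Y}_t^T$. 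That observation cannot close the proof: a symmetric positive definite matrix can perfectly well have an eigenvector whose first entry equals $N/\sqrt{\omega_d}$ while its remaining entries are nonzero, so the eigenvector relation alone never forces $\mathbf{r}(\bX)=\matr{0}$. Your closing sentences (``the fundamental-system property forces \ldots genuine vanishing of the Weyl sums rather than accidental equality'') assert precisely the statement that has to be proven; no derivation is given, and the fundamental-system hypothesis is never actually deployed.

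The missing idea is a short kernel argument, not an eigenvalue argument. Write $\matr{G}_t=\frac{1}{\omega_d}\matr{e}\matr{e}^T+(\mathbf{Y}_t^0)^T\mathbf{Y}_t^0$, so that $\matr{C}_t(\bX)=\matr{0}$ says all entries of $(\mathbf{Y}_t^0)^T\mathbf{r}(\bX)$ are equal, i.e. $(\mathbf{Y}_t^0)^T\mathbf{r}(\bX)=\nu\,\matr{e}$ for some scalar $\nu$. Since $\matr{e}$ is $\sqrt{\omega_d}$ times the first row of $\mathbf{Y}_t$, this is equivalent to
\begin{equation*}
\mathbf{Y}_t^T\left[
\begin{array}{c}
-\sqrt{\omega_d}\,\nu \\
\mathbf{r}(\bX)
\end{array}
\right]=-\nu\,\matr{e}+(\mathbf{Y}_t^0)^T\mathbf{r}(\bX)=\matr{0},
\end{equation*}
and by Lemma \ref{Lemmasing} the matrix $\mathbf{Y}_t^T$ has full column rank $d_t$, so its kernel is trivial; hence $\nu=0$ and $\mathbf{r}(\bX)=\matr{0}$, which by \eqref{def:r(X)} means $\bX$ is a spherical $t$-design. (In your function language: the polynomial $q=\sum_{\ell=1}^{t}\sum_{k}r_{\ell,k}Y_{\ell,k}-\nu\in\mathbb{P}_t$ vanishes at every node, so $q\equiv 0$ by the fundamental-system property, and linear independence of $1$ and the $Y_{\ell,k}$ gives $\nu=0$ and all Weyl sums zero.) Your eigenvector route can be salvaged, but only by inserting this same polynomial identification: from $cN=\matr{e}^T\matr{G}_t\matr{e}=\vectornorm{\mathbf{Y}_t\matr{e}}^2>0$ one gets $c\neq 0$, hence $\matr{e}=\frac{1}{c}\mathbf{Y}_t^T(\mathbf{Y}_t\matr{e})$ lies in the range of $\mathbf{Y}_t^T$; the preimage polynomial equals $1$ at every node, so the fundamental-system property forces it to be the constant $1$, and feeding that back yields $\mathbf{r}(\bX)=\matr{0}$. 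Either way, the step you left as an assertion is the entire content of the hard direction.
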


Define the nonnegative quantity
\begin{equation}\label{eq:Dt}
D_{N,\,t}(\bX):=\frac{\omega_d^2}{N^2}\matr{C}_t(\bX)^{T}\matr{C}_t(\bX).
\end{equation}
We have the following result immediately.
\begin{corollary}
  \label{cor:cc}
\upshape Let $N\geq d_t$. Suppose that $\mathcal{X}_{N}=\{%
\mathbf{x}_{1},...,\mathbf{x}_{N}\}$ is a fundamental system for $\mathbb{P}%
_{t}$. Then
\begin{equation}
\label{ineq:Dtbound} 0\leq D_{N,\,t}(\bX)\leq 4(N-1)M^2(d+1,t),
\end{equation}
and $\bX$ is a spherical $t$-design if and only if
\begin{equation*}\label{eq:Dt0}
D_{N,\,t}(\bX)={0}.
\end{equation*}

\end{corollary}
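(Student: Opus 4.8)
The plan is to prove Corollary \ref{cor:cc} as a direct consequence of Theorem \ref{Th:C}, treating the corollary's two assertions separately: first the two-sided bound \eqref{ineq:Dtbound} on $D_{N,\,t}$, which is a routine norm estimate, and then the equivalence with the spherical $t$-design property, which reduces immediately to the theorem. Since $D_{N,\,t}(\bX)=\frac{\omega_d^2}{N^2}\matr{C}_t(\bX)^T\matr{C}_t(\bX)=\frac{\omega_d^2}{N^2}\norm{\matr{C}_t(\bX)}^2$ is manifestly a squared norm scaled by a positive constant, the lower bound $D_{N,\,t}(\bX)\geq 0$ is automatic.

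For the upper bound, I would estimate $\norm{\matr{C}_t(\bX)}^2$ entrywise. Recall from \eqref{eq:Ct} that $\matr{C}_t(\bX)=\matr{E}\matr{G}_t(\bX)\mathbf{e}$, where $\matr{G}_t=\mathbf{Y}_t^T\mathbf{Y}_t$ and $\matr{E}=[\mathbf{e},-\matr{I}_{N-1}]$. The entries of $\matr{G}_t$ are the reproducing-kernel values $(\matr{G}_t)_{ij}=\sum_{\ell=0}^t\sum_k Y_{\ell,k}(\vx_i)Y_{\ell,k}(\vx_j)$, which by the addition theorem \eqref{eq:addTh} equal $\sum_{\ell=0}^t\frac{M(d,\ell)}{\omega_d}P_\ell^{(d+1)}(\vx_i\cdot\vx_j)$. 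Since $\abs{P_\ell^{(d+1)}(s)}\leq P_\ell^{(d+1)}(1)=1$ for $s\in[-1,1]$, each entry is bounded in absolute value by $\frac{1}{\omega_d}\sum_{\ell=0}^t M(d,\ell)=\frac{\dt}{\omega_d}$, and one checks $\dt=M(d+1,t)$. I would then bound each component of $\matr{C}_t(\bX)=\matr{E}\matr{G}_t\mathbf{e}$: the $i$-th component is a difference of two row-sums of $\matr{G}_t\mathbf{e}$, so it is at most $2N\cdot\frac{M(d+1,t)}{\omega_d}$ in absolute value. Summing the squares of the $N-1$ components and multiplying by $\frac{\omega_d^2}{N^2}$ then yields the bound $4(N-1)M^2(d+1,t)$, after the factors of $\omega_d$ and $N$ cancel. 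The main care here is getting the constant right, in particular confirming that the factor $2N$ per component is correct and that the $\omega_d$ scaling in \eqref{eq:Dt} matches.

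The equivalence is the short half: $D_{N,\,t}(\bX)=\frac{\omega_d^2}{N^2}\norm{\matr{C}_t(\bX)}^2=0$ holds if and only if $\matr{C}_t(\bX)=\mathbf{0}$, since $\frac{\omega_d^2}{N^2}>0$. Under the standing hypotheses that $N\geq\dt$ and $\bX$ is a fundamental system for $\mathbb{P}_t$, Theorem \ref{Th:C} states precisely that $\matr{C}_t(\bX)=\mathbf{0}$ is equivalent to $\bX$ being a spherical $t$-design. Chaining these gives $D_{N,\,t}(\bX)=0$ if and only if $\bX$ is a spherical $t$-design, which completes the proof. I do not anticipate a genuine obstacle; the only nontrivial point is the bookkeeping for the explicit constant in \eqref{ineq:Dtbound}, and everything else is an immediate restatement of Theorem \ref{Th:C}.
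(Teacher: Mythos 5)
Your proposal is correct and follows essentially the same route as the paper: the nonnegativity is immediate, the upper bound comes from the addition theorem \eqref{eq:addTh} together with $|P_\ell^{(d+1)}|\leq 1$ giving the componentwise bound $\frac{2N}{\omega_d}M(d+1,t)$ on $\matr{C}_t(\bX)$, and the equivalence is a direct restatement of Theorem \ref{Th:C} since $D_{N,\,t}(\bX)=0$ iff $\matr{C}_t(\bX)=\mathbf{0}$. The only cosmetic difference is that you bound the two row-sums of $\matr{G}_t$ separately by the triangle inequality, whereas the paper bounds the difference of Legendre values by $2$ inside a single sum; the constants work out identically.
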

\begin{proof}
From the definition of $\matr{C}_{t}(\bX)$, see \eqref{eq:Ct}, and
with the aid of addition theorem \eqref{eq:addTh}, we have
\begin{align}\label{eq:papp}
|(\matr{C}_{t}(\bX))_i|&=|(\matr{G}_t\matr{e})_1-(\matr{G}_t\matr{e})_{i+1}|\nonumber\\
&=\Big|\frac{1}{\omega_d}\sum_{j=1}^{N}\sum_{\ell=0}^{t}M(d,\,\ell)(P_{\ell}^{(d+1)}(\vx_1\cdot\vx_j))-P_{\ell}^{(d+1)}(\vx_{i+1}\cdot\vx_j))\Big|\\
&\leq\frac{2}{\omega_d}\sum_{j=1}^{N}\sum_{\ell=0}^{t}M(d,\,\ell)\nonumber\\
&=\frac{2N}{\omega_d}M(d+1,\,t),\nonumber \quad\quad i=1,\ldots,N-1.
\end{align}
Then by using the definition of \eqref{eq:Dt}, we obtain
\eqref{ineq:Dtbound}.

\end{proof}

\begin{remark}
  \label{remark:Dt}
  For the special case in $d=2$, \eqref{eq:papp} was show in
  \cite[Section 4]{Chen}.
\end{remark}

\section{Proofs}
For the completeness of this paper, we give proofs for Theorem
\ref{Th:variation} and Theorem \ref{Th:C}.
\subsection{Proof of Theorem\ref{Th:variation}}
This theorem rests on the following Lemma taken from \cite{Sloan}.
\begin{lemma}
\upshape\cite{Sloan}\label{le4} \itshape Let $t\geq 1$, and suppose $%
\mathcal{X}_{N}$ is a stationary point of $A_{N,\,t}$. Then either $\mathcal{X%
}_{N}$ is a spherical $t$-design, or there exists a non-constant polynomial $%
p\in \mathbb{P}_{t}$ with a stationary point at each point $\mathbf{x}%
_{i}\in \mathcal{X}_{N},i=1,\ldots,N.$
\end{lemma}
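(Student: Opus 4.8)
The plan is to regard $A_{N,\,t}$ as a smooth function on the product manifold $(\bS)^N$ and to read off its stationarity condition through a single auxiliary polynomial whose coefficients are exactly the Weyl sums. Writing $r_{\ell,k}:=\sum_{j=1}^{N}Y_{\ell,k}(\vx_j)$ for the entries of $\mathbf{r}(\mathcal{X}_N)$, the quantity $A_{N,\,t}=\tfrac{\omega_d}{N^2}\,\mathbf{r}(\mathcal{X}_N)^T\mathbf{r}(\mathcal{X}_N)$ is the sum of squares $\tfrac{\omega_d}{N^2}\sum_{\ell=1}^{t}\sum_{k=1}^{M(d,\ell)}r_{\ell,k}^2$. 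First I would compute the surface (tangential) gradient of $A_{N,\,t}$ with respect to a single node $\vx_i$. Since $\vx_i$ enters only the $i$-th summand of each $r_{\ell,k}$, the chain rule yields
\begin{equation*}
\nabla^{*}_{\vx_i}A_{N,\,t}=\frac{2\omega_d}{N^2}\sum_{\ell=1}^{t}\sum_{k=1}^{M(d,\ell)}r_{\ell,k}\,\nabla^{*}Y_{\ell,k}(\vx_i),
\end{equation*}
where $\nabla^{*}$ is the surface gradient on $\bS$.

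The decisive step is to introduce the polynomial
\begin{equation*}
p:=\sum_{\ell=1}^{t}\sum_{k=1}^{M(d,\ell)}r_{\ell,k}\,Y_{\ell,k}\in\mathbb{P}_t,
\end{equation*}
whose coefficients are precisely the Weyl sums. With this choice $\nabla^{*}_{\vx_i}A_{N,\,t}=\tfrac{2\omega_d}{N^2}\,\nabla^{*}p(\vx_i)$, so the hypothesis that $\mathcal{X}_N$ is a stationary point of $A_{N,\,t}$ (vanishing of the tangential gradient at every node) says exactly that $\nabla^{*}p(\vx_i)=\mathbf{0}$ for $i=1,\ldots,N$, i.e.\ each $\vx_i$ is a stationary point of $p$ on $\bS$. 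I would then split according to whether $\mathbf{r}(\mathcal{X}_N)=\mathbf{0}$. If it vanishes, all Weyl sums are zero and, by the characterization \eqref{Weyl}, $\mathcal{X}_N$ is a spherical $t$-design. If $\mathbf{r}(\mathcal{X}_N)\neq\mathbf{0}$, then some $r_{\ell,k}$ with $\ell\geq1$ is nonzero, so $p$ is a nonzero combination of harmonics of degree at least one; being $L_2(\bS)$-orthogonal to the constants it cannot be a nonzero constant, hence $p$ is non-constant and has a stationary point at each $\vx_i$, which is the second alternative.

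The main care-point, rather than a deep obstacle, is the bookkeeping of differentiation on the curved domain: because the nodes are constrained to $\bS$, stationarity must be expressed through the tangential gradient (equivalently, via Lagrange multipliers for the constraints $\|\vx_i\|_2=1$), and I would check that the normal components cancel consistently so that the identity $\nabla^{*}_{\vx_i}A_{N,\,t}=\tfrac{2\omega_d}{N^2}\,\nabla^{*}p(\vx_i)$ holds componentwise. The only other line needing justification is the non-constancy of $p$ when $\mathbf{r}(\mathcal{X}_N)\neq\mathbf{0}$, which follows from the orthogonality of spherical harmonics of degree $\geq1$ to the degree-zero harmonic. Once the auxiliary polynomial $p$ is identified with the Weyl-sum vector, the dichotomy is immediate, so the crux of the argument is precisely this identification together with the correct surface-gradient computation.
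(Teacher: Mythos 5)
Your proposal is correct and takes essentially the same route as the source of this lemma: the paper does not prove Lemma \ref{le4} itself but imports it from \cite{Sloan}, where the argument is precisely yours---freeze the Weyl sums into the auxiliary polynomial $p=\sum_{\ell=1}^{t}\sum_{k=1}^{M(d,\ell)}r_{\ell,k}Y_{\ell,k}\in\mathbb{P}_t$, note that the tangential gradient of $A_{N,\,t}$ at the node $\vx_i$ equals $\tfrac{2\omega_d}{N^2}\nabla^{*}p(\vx_i)$, and split on whether $\mathbf{r}(\mathcal{X}_N)$ vanishes, with non-constancy of $p$ in the second case following from orthogonality of the degree-$\geq 1$ harmonics to the constants. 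Your two flagged care-points (stationarity via the surface gradient, equivalently Lagrange multipliers for $\|\vx_i\|_2=1$, and the non-constancy check) are exactly the points handled in \cite{Sloan}, and your frozen-coefficient polynomial is the non-constant $q\in\mathbb{P}_t$ that the paper's proof of Theorem \ref{Th:variation} subsequently differentiates.
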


In the following for completeness we give the proof of Theorem
\ref{Th:variation}.

\begin{proof}
Suppose $\mathcal{X}_{N}$ is not a spherical $t$-design. Then by Lemma \ref%
{le4}, there exists a non-constant polynomial\textit{\
}$\mathit{q}\in
\mathbb{P}_{t}$ with a stationary point at each $\mathbf{x}_{i}\in \mathcal{X%
}_{N},i=1,\ldots ,N$, i.e.
\begin{equation}
\nabla ^{\ast }q(\mathbf{x}_{j})=0,~~~~~~j=1,\ldots,N,  \label{eq
33}
\end{equation}%
Now define
\begin{equation*}
p_{i}=\mathbf{e}_{i}\cdot \nabla ^{\ast }q,~~~~~~i=1,\ldots,d+1,
\end{equation*}%
where $\mathbf{e}_{1},\ldots,\mathbf{e}_{d+1}$ are the unit vectors
in the direction of the (fixed) coordinate axes for
$\mathbb{R}^{d+1}$, and the dot indicates the inner product in
$\mathbb{R}^{d+1}$.

By the stationary property of $q$, each $p_{i}$ for~$i=1,\ldots,d+1$
satisfies
\begin{equation*}
p_{i}(\mathbf{x}_{j})=0,~~~~~~j=1,\ldots,N.
\end{equation*}
Since $q$ is not a constant polynomial, at least one component of
$\nabla
^{\ast }q$ does not vanish identically, hence at least one of $%
p_{1},\ldots,p_{d+1}$ is not identically zero.\newline Assume
\begin{equation}
p:=p_{i_{0}}
\end{equation}
is not identically zero. Then because $q$ is a linear combination of
spherical harmonics $Y_{\ell,k}$
with $\ell=1,\ldots,t$ (see \cite{GW}, Chapter 12), then $p=p_{i_{0}}=\mathbf{e}%
_{i_{0}}\cdot \nabla ^{\ast }q$ is a linear combination of spherical
harmonics of degree $\ell-1$ and $\ell+1$. Thus for
$q\in\mathbb{P}_{t}$, then $p\in\mathbb{P}_{t+1}$.

Finally (\ref{eq 33}) gives
\begin{equation}
p(\mathbf{x}_{j})=0\text{ },~~~~ \text{}~~ j=1,\ldots,N,
\end{equation}
completing the proof.
\end{proof}

\subsection{Proof of Theorem\ref{Th:C}}
\begin{proof}
From (\ref{Gdef}), we have
\begin{equation*}
\matr{G}_{t} =\left[ \frac{1}{\sqrt{\omega_{d} }}\mathbf{e}%
\text{ }\,\,(\mathbf{Y}_{t}^{0}) ^{T}\right] %
\left[
\begin{array}{c}
\frac{1}{\sqrt{\omega_{d} }}\mathbf{e}^{T} \\
\mathbf{Y}_{t}^{0}
\end{array}%
\right] =\frac{1}{\omega_{d}} \mathbf{ee}^{T}+(\mathbf{Y}_{t}^{0})
^{T}\mathbf{Y}_{t}^{0}.
\end{equation*}%
Hence, from (\ref{eq:Ct}) and (\ref{def:r(X)}) we obtain
\begin{equation}
\matr{C}  _{t}\left( \mathcal{X}_{N}\right) =\frac{1}{\omega_d
}\matr{E}(\mathbf{Y}_{t}^{0})^{T}\mathbf{Y}_{t}^{{0}}
\mathbf{e}=\frac{1}{\omega_d }\matr{E}(\mathbf{Y}_{t}^{0})^{T}\mathbf{r}(\bX)%
 .  \label{C2}
\end{equation}
Let $\mathcal{X}_{N}=\{\mathbf{x}_{1},\dots,\mathbf{x}_{N}\}$ be a
fundamental system for $\mathbb{P}_{t}$.

Assume $\matr{C}_{t}\left( \mathcal{X}_{N}\right) =\mathbf{0}$, so we have
\begin{equation*}
\matr{E}(\mathbf{Y}_{t}^{0}) ^{T}\mathbf{r}\left(
\mathcal{X}_{N}\right) =\mathbf{0}.
\end{equation*}%
Then, all elements of $(\mathbf{Y}_{t}^{0})^{T}\mathbf{r}(%
\mathcal{X}_{N})$ are equal, i.e. there is a scalar $\nu $ such that
\begin{equation*}
\mathbf{(\mathbf{Y}}_{t}^{0})^{T}\mathbf{r}\left( \mathcal{X}%
_{N}\right) =\nu \mathbf{e}.
\end{equation*}%
This implies
\begin{equation*}
\left[ \frac{1}{\sqrt{\omega_{d} }}\mathbf{e}\text{ \
}\,(\mathbf{Y}_{t}^{0}) ^{T}\right] \left[
\begin{array}{c}
-\sqrt{\omega_d }\nu  \\
\mathbf{r}\left( \mathcal{X}_{N}\right)
\end{array}%
\right] =\mathbf{Y}_{t} ^{T}\left[
\begin{array}{c}
-\sqrt{\omega_d }\nu  \\
\mathbf{r}\left( \mathcal{X}_{N}\right)
\end{array}%
\right]=\mathbf{0}.
\end{equation*}%

Since $\mathbf{Y}_{t}^{T}$ is of full~(column)~rank, the only
solution is
\begin{equation*}
\nu =0,\qquad\mathbf{r}\left( \mathcal{X}_{N}\right) =\mathbf{0},
\end{equation*}%
$\mathcal{X}_{N}$ is a spherical $t$%
-design by following the matrix-vector form of Weyl sums is zero,
see (\ref{C2}). Conversely, suppose $\mathcal{X}_{N}$ is a spherical
$t$-design. By using \eqref{def:r(X)}, $\mathbf{r}\left(
\mathcal{X}_{N}\right) =\mathbf{0}$.
From (\ref{C2}) we have%
\begin{equation*}
\matr{C}_{t}\left( \mathcal{X}_{N}\right) =\mathbf{0}.
\end{equation*}
\end{proof}
\section{Discussion}
The geometry of a configuration on the unit sphere is a very
important issue when one considers numerical interpolation
\cite{ACSW}, potential theory \cite{HL}, and numerical integration
\cite{SW2000}\cite{HSW}. It is known that a spherical $t$-design
with a fixed number of points can have arbitrarily small minimum
distance between points (see \cite{HL}). Thus, a spherical
$t$-design can be with bad geometry \cite{ACSW} \cite{Kore}.
\begin{definition}
\label{def:welspds}
A point set $\bX\subset\bS$ is  \emph{well separated}, if the \emph{%
separation distance}
\begin{equation}\label{sep}
\delta _{\mathcal{X}_{N}}:=\min_{\mathbf{x}_{i},\mathbf{x}_{j}\in \mathcal{X}%
_{N},i\neq j}\rm{dist}\it\left( \mathbf{x}_{i},\mathbf{x}_{j}\right)
\geq \frac{c_d}{N^d}.
\end{equation}
\end{definition}
 The well conditioned spherical
designs is a well separated spherical designs investigated by
\cite{ACSW} for $N=(t+1)^2$ on $\bs$. The existence of well
separated spherical designs has been proved in recently
\cite{Bondarenko13_1}.

 It is known that mesh norm is the
covering radius for covering the sphere with spherical
caps of the smallest possible equal radius centered at the points in $%
\mathcal{X}_{N}$, while the separation distance $\delta
_{\mathcal{X}_{N}}$
is twice the packing radius, so $h_{\mathcal{X}_{N}}\geq \delta _{\mathcal{X}%
_{N}}/2$. As mentioned by \cite{HSW}, when the \emph{mesh ratio}
$\rho _{\mathcal{X}_{N}}$:
\begin{equation*}
\rho _{\mathcal{X}_{N}}:=\frac{2h_{\mathcal{X}_{N}}}{\delta _{\mathcal{X}_{N}}%
}\geq 1
\end{equation*}%
is smaller, the more uniformly are the points distributed on
$\mathbb{S}^{d}$. That is to say, mesh ratio can be regarded as a
good measure for the quality of the geometric distribution of
$\mathcal{X}_{N}$. For more information about point sets on sphere
and their applications, we refer to \cite{Saff_Kui} and \cite{GW}.

For choosing the point set $\bX$, if the points may be freely
chosen, then we shall see that there is merit in employing spherical
$t$-design to be nodes with some appropriate value of $t$ in
practical problems. Spherical $t$-designs have many applications:
interpolation, hyperinterpolation, numerical integrations, filtered
hyperinterpolations and regularized least squares approximations and
so on. Among these approximation schemes, especially on constructive
approximations, spherical $t$-design plays an irreplaceable role.
Thus, the study on how to construct the point set is really
necessary.

As is shown in \cite{Bj}\cite{EBannai}, the power of analytical
constructions for spherical $t$-designs is limited. It is merit to
study how to obtain (approximated) spherical $t$-designs by
numerical methods. For example, with the aid of nonlinear
optimization techniques, there are strong numerical results that
there exist spherical t-designs with close to $(t + 1)^2/2$ points
\cite{Sloan}\cite{womhalf}. Moreover, ``symmetric spherical
$t$-designs" \cite{womhalf} enjoy nice geometrical distribution.
This paper provides two ways to determinant spherical $t$-designs on
$\bS$ in fundamental systems for given $t$ with $N \geq\dim(\bP)$
points. It is evident that high dimensional optimization problems
have to be taken into account. Consequently, how to find a spherical
$t$-design on $\bS$ for a given large $t$ by reliable numerical
methods? This is a challenge problem even on $\bs$. Interval method
\cite{Alefeld}
 provided a useful way to guarantee there is a
very small neighborhood which contained a true spherical $t$-design
and computed spherical $t$-design \cite{Chen}\cite{CF}. Can we
extend this tool to general sphere $\bS$? This may be the thing to
be aimed at.

Clearly, a lot of work is need before we can claim to really
understand spherical $t$-design and its properties and applications.

\section{Acknowledgements}
 The author would like to acknowledge Eiichi Bannai and Yaokun Wu for their
encouragement on this work. He is indebted to Ian. H. Sloan and
Xiaojun Chen for discussions on spherical $t$-designs and to Rob S.
Womersley, Shuogang Gao, Takayuki Okuda, Makoto Tagami, Wei-Hsuan Yu
for stimulating suggestions.

{\small }

\end{document}